\newtheorem{prethm}{{\bf Theorem}}
\newenvironment{thm}{\begin{prethm}{\hspace{-0.5
em}{\bf.}}}{\end{prethm}}
\newtheorem{precor}{{\bf Corollary}}
\newenvironment{cor}{\begin{precor}{\hspace{-0.5
em}{\bf.}}}{\end{precor}}
\newtheorem{preprop}{{\bf Proposition}}
\newtheorem{preque}{{\bf Question}}
\newtheorem{preques}{{\bf Question}}
\newtheorem{prelemma}{{\bf Lemma}}
\newenvironment{lemma}{\begin{prelemma}{\hspace{-0.5
em}{\bf.}}}{\end{prelemma}}
\newtheorem{prefact}{{\bf Fact}}
\newtheorem{preobs}{{\bf Observation}}
\newtheorem{prefig}{{\bf Figure}}
\newtheorem{prelemm}{{\bf Lemma}}
\newtheorem{preex}{{\bf Example}}
\newenvironment{ex}{\begin{preex}{\hspace{-0.5em}{\bf .}}}{\end{preex}}
\newtheorem{prepro}{{\bf Proposition}}
\newtheorem{prelem}{{\bf Theorem}}
\newenvironment{lem}{\begin{prelem}{\hspace{-0.5
em}{\bf.}}}{\end{prelem}}
\newtheorem{preproof}{{\bf Proof.}}
\newenvironment{proof}[1]{\begin{preproof}{\rm
               #1}\hfill{$\rule{2mm}{2mm}$}}{\end{preproof}}
\newtheorem{preconj}{{\bf Conjecture}}
\newtheorem{predeff}{{\bf Definition}}
\newenvironment{deff}{\begin{predeff}{\hspace{-0.5
em}{\bf.}}}{\end{predeff}}
\def\newpic#1{}
\date{}
\begin{document}

\title{
{\Large{\bf Relations between  Metric Dimension and Domination Number of Graphs}}}
%

{\small
\author{
{\sc Behrooz Bagheri Gh.}, {\sc Mohsen Jannesari},
{\sc Behnaz Omoomi}\\
[1mm]
{\small \it  Department of Mathematical Sciences}\\
{\small \it  Isfahan University of Technology} \\
{\small \it 84156-83111, \ Isfahan, Iran}}




 \maketitle \baselineskip15truept

\begin{abstract}
 A set $W\subseteq V(G)$ is called a  resolving set, if
for each two distinct vertices $u,v\in V(G)$ there exists $w\in W$
such that $d(u,w)\neq d(v,w)$,  where $d(x,y)$ is the distance
between the vertices $x$ and $y$. The minimum cardinality of a
resolving set for $G$ is called the  metric dimension of $G$, and
denoted by $\beta(G)$. In  this paper, we prove that in a
connected graph $G$ of order $n$,  $\beta(G)\leq n-\gamma(G)$,
where $\gamma(G)$ is the domination number of $G$, and the
equality holds if and only if $G$ is a complete graph or a
complete bipartite graph $K_{s,t}$, $ s,t\geq 2$.   Then, we
obtain new bounds for $\beta(G)$ in terms of minimum and maximum
degree of $G$.
\end{abstract}

{\bf Keywords:}  Resolving set; Metric dimension;  Dominating
set; Domination number.
\section{Introduction}
 Throughout the paper, $G=(V,E)$ is a
finite, simple, and connected  graph of order $n$. The distance
between two vertices $u$ and $v$, denoted by $d(u,v)$, is the
length of a shortest path between $u$ and $v$ in $G$. The diameter
of $G$, denoted by $diam(G)$ is $\max\{d(u,v)\ |\ u,v\in V\}$. The
set of all neighbors of a vertex $v$ is denoted by $N(v)$. The
maximum degree and minimum degree of  graph $G$, are denoted by
$\Delta(G)$ and $\delta(G)$, respectively. The notations $u\sim v$
and $u\nsim v$ denote the adjacency and non-adjacency relations
between $u$ and $v$, respectively.

\par
For an ordered set $W=\{w_1,w_2,\ldots,w_k\}\subseteq V(G)$ and a
vertex $v$ of $G$, the  $k$-vector
$$r(v|W):=(d(v,w_1),d(v,w_2),\ldots,d(v,w_k))$$
is called  the {\it metric representation}  of $v$ with respect
to $W$. The set $W$ is called a {\it resolving set} for $G$ if
distinct vertices have different representations.  A resolving
set for $G$ with minimum cardinality is  called a {\it metric
basis}, and its cardinality is the {\it metric dimension} of $G$,
denoted by $\beta(G)$.


It is obvious that to see  whether a given set $W$ is a resolving
set, it is sufficient to consider the
 vertices in $V(G)\backslash W$, because $w\in
W$ is the unique vertex of $G$ for which $d(w,w)=0$.
 When $W$ is a resolving set for $G$, we say
that $W$ {\it resolves} $G$. In general, we say an ordered set $W$
resolves a set $T\subseteq V(G)$ of vertices in $G$, if for each
two distinct vertices $u,v\in T$, $r(u|W)\neq r(v|W)$.

\par In~\cite{Slater1975}, Slater introduced the idea of a resolving
set and used a {\it locating set} and the {\it location number}
for what we call a resolving set and the metric dimension,
respectively. He described the usefulness of these concepts when
working with U.S. Sonar and Coast Guard Loran stations.
Independently, Harary and Melter~\cite{Harary} discovered the
concept of the location number as well and called it the metric
dimension. For more results related to these concepts
see~\cite{cartesian product,bounds,sur1,landmarks,sur2}. The
concept of a resolving set has various applications in diverse
areas including coin weighing problems~\cite{coin}, network
discovery and verification~\cite{net2}, robot
navigation~\cite{landmarks}, mastermind game~\cite{cartesian
product}, problems of pattern recognition and image
processing~\cite{digital}, and combinatorial search and
optimization~\cite{coin}.

\par

The following bound is the known upper bound for metric dimension.

\begin{lem}~{\rm\cite{Ollerman}}\label{n-d}
If $G$ is a connected graph of order $n$, then $\beta(G)\leq
n-diam(G)$.
\end{lem}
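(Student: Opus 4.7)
The plan is to exhibit an explicit resolving set of size $n - diam(G)$. Let $d = diam(G)$ and pick two vertices $u, v \in V(G)$ with $d(u,v) = d$. Since $G$ is connected, there is a shortest $u$-$v$ path $P: u = v_0, v_1, v_2, \ldots, v_d = v$ of length exactly $d$.

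The key observation is that along such a shortest path, $d(u, v_i) = i$ for every $i \in \{0, 1, \ldots, d\}$. Indeed, the initial subpath $v_0, v_1, \ldots, v_i$ realizes distance $i$, and if there were a strictly shorter $u$-$v_i$ walk, concatenating it with $v_i, v_{i+1}, \ldots, v_d$ would produce a $u$-$v$ walk of length less than $d$, contradicting $d(u,v) = d$. Consequently, the internal vertices $v_1, v_2, \ldots, v_d$ of $P$ have pairwise distinct distances to $u$.

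I would then set $W := V(G) \setminus \{v_1, v_2, \ldots, v_d\}$, so that $|W| = n - d$, and note that $u = v_0 \in W$. To verify $W$ resolves $G$, by the remark preceding the lemma it suffices to resolve distinct pairs of vertices in $V(G) \setminus W = \{v_1, \ldots, v_d\}$. For any $i \neq j$ in $\{1, \ldots, d\}$, the coordinate of $r(v_i \mid W)$ corresponding to $u$ equals $i$, while the same coordinate of $r(v_j \mid W)$ equals $j$, so $r(v_i \mid W) \neq r(v_j \mid W)$. Hence $W$ is a resolving set and $\beta(G) \leq |W| = n - diam(G)$.

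There is essentially no obstacle beyond choosing the right candidate set; the whole argument rests on the one structural fact that prefixes of a shortest path are themselves shortest paths, which makes the single landmark $u$ already distinguish all removed vertices.
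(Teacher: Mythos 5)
Your proof is correct and is the standard argument for this bound (the paper itself only cites the result from Chartrand et al.\ without proof): remove the $d$ internal-and-terminal vertices $v_1,\dots,v_d$ of a diametral shortest path and observe that the remaining endpoint $u$ alone distinguishes them, since prefixes of a shortest path are shortest paths. No gaps; the appeal to the remark that only pairs outside $W$ need checking is exactly the right way to finish.
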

%
A set $\Gamma\subseteq V(G)$ is a {\it dominating set} for $G$ if every vertex not in $\Gamma$ has a neighbor in $\Gamma$.
 A dominating set with minimum size is a {\it minimum dominating set} for $G$.
 The {\it domination number} of $G$, $\gamma(G)$,  is the cardinality of a minimum dominating set.
 In  Section~\ref{main}, we prove that $\beta(G)\leq n-\gamma(G)$.
 Moreover, we prove that  $\beta(G)= n-\gamma(G)$ if and only if $G$ is a complete
 graph or a complete bipartite graph $K_{s,t}$, $s,t \geq 2$.
  In Section~\ref{newbounds},  regarding to known bounds of
  $\gamma(G)$, we obtain new upper bounds for metric dimension  in
  terms of other graph parameters.

\section{Main Results}\label{main}

In this section, we prove that $\beta(G)\leq n-\gamma(G)$.
Moreover, we show that  $\beta(G)= n-\gamma(G)$ if and only if $G$
is a complete
 graph or a complete bipartite graph $K_{s,t}$, $s,t \geq 2$.

Two vertices $u,v\in V(G)$ are called {\it false twin} vertices
if $N(u)=N(v)$.

\begin{lemma}\label{lem:domsetnoft}
Let $G$ be a connected graph. Then there exists a minimum dominating set for $G$ which does not have any pair of false twin vertices.
\end{lemma}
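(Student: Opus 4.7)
The plan is to begin with an arbitrary minimum dominating set and massage it by local swaps until no two members are false twins. I would choose $\Gamma$ to be a minimum dominating set which, among all minimum dominating sets, contains the fewest pairs of false twin vertices; if this minimum count is positive, I aim to produce another minimum dominating set with strictly fewer such pairs, contradicting the choice of $\Gamma$. So assume $u,v\in\Gamma$ satisfy $N(u)=N(v)=:N$; note that $u\nsim v$ because $u\notin N(u)=N(v)$.

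The first step is to establish that $N\cap\Gamma=\emptyset$. Any vertex $z\neq v$ dominated by $v$ lies in $N(v)=N(u)$, hence is still dominated by $u\in\Gamma\setminus\{v\}$; so $\Gamma\setminus\{v\}$ dominates every vertex except possibly $v$ itself, and it dominates $v$ precisely when $v$ has a neighbor in $\Gamma\setminus\{v\}$. Consequently any element of $N\cap\Gamma$ would yield a dominating set of size $|\Gamma|-1$, contradicting the minimality of $\gamma(G)$.

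The core move is then to pick any $w\in N$ (nonempty since $G$ is connected with $|V(G)|\ge 2$) and form $\Gamma'=(\Gamma\setminus\{v\})\cup\{w\}$. By the same reasoning $\Gamma'$ dominates $G$ (now $w$ dominates $v$, and $u$ dominates the rest of $N$), and $|\Gamma'|=|\Gamma|$, so $\Gamma'$ is also a minimum dominating set; the offending pair $\{u,v\}$ has disappeared.

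The main obstacle is to verify that $w$ does not create new false twin pairs inside $\Gamma'$. The clean way is to show that the whole false twin class $C_w$ of $w$ sits inside $N$: if $N(y)=N(w)$, then $v\in N(w)$ forces $v\in N(y)$, hence $y\in N(v)=N$. Since $N\cap\Gamma=\emptyset$ by Step~1 and $v\notin C_w$ (because $v\in N(w)$ but $v\notin N(v)$), we get $C_w\cap\Gamma'=\{w\}$, so $w$ has no false twin in $\Gamma'$. The remaining false twin pairs of $\Gamma'$ coincide with the false twin pairs of $\Gamma$ that avoid $v$, so the count strictly drops, yielding the contradiction.
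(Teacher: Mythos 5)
Your proof is correct and follows essentially the same route as the paper: choose a minimum dominating set minimizing the number of false twin pairs, observe that a twin pair has no neighbors inside the set, and swap one twin for a neighbor outside the set to strictly reduce the count. The only place you go beyond the paper is in explicitly verifying that the replacement vertex $w$ acquires no new false twins in $\Gamma'$ (via the observation that its twin class lies in $N$, which is disjoint from $\Gamma$) --- a point the paper's proof asserts without checking.
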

\begin{proof}{
Let $\Gamma$ be a minimum dominating set for $G$ with minimum
number of false twin pairs of vertices and $u, v$ be an arbitrary
false twin pair in $\Gamma$. Since $u$ and $v$ dominate the same
vertices in $G$, they have no neighbors in $\Gamma$; otherwise,
$\Gamma\setminus\{u\}$ and $\Gamma\setminus\{v\}$ are dominating
sets in $G$ which is a contradiction. On the hand,    $G$ is
connected, hence $u$ and $v$ have some neighbors  in
$V(G)\setminus \Gamma$.
  Now,  $\Gamma'=\Gamma\cup\{x\}\setminus\{u\}$, where $x$ is a neighbor of $u$ in $V(G)\setminus
  \Gamma$,   is a dominating set for $G$ with fewer  number of  false twin pair of vertices.
 This contradiction implies that $\Gamma$ has no  false twin pair of  vertices.
}\end{proof}

\begin{thm}\label{thm:V-Gama resolv}
For every connected graph $G$ of order $n$, $\beta(G)\leq
n-\gamma(G).$ In particular,  if $\Gamma$ is a minimum dominating
set for $G$ with no false twin pair of  vertices,
 then $V(G)\setminus\Gamma$ is a resolving set for $G$.
\end{thm}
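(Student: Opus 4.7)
The plan is to prove the ``in particular'' assertion directly, since the inequality $\beta(G)\leq n-\gamma(G)$ is then immediate. Fix a minimum dominating set $\Gamma$ supplied by Lemma~1 (having no false twin pair) and set $W=V(G)\setminus\Gamma$. I will show that $W$ resolves $G$. Any two distinct $w,w'\in W$ are distinguished because $r(w|W)$ is the unique representation with a $0$ in coordinate $w$; the same observation separates every $w\in W$ from every $v\in\Gamma$. Hence it suffices to verify that any two distinct vertices $u,v\in\Gamma$ satisfy $r(u|W)\neq r(v|W)$.

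Argue by contradiction: assume $u,v\in\Gamma$, $u\neq v$, with $r(u|W)=r(v|W)$. Reading each coordinate $x\in W$ as "is $d(x,u)=1$?" immediately gives $N(u)\cap W = N(v)\cap W$, since any $x\in W$ adjacent to $u$ must also be adjacent to $v$ (and vice versa). Because $\Gamma$ has no false twin pair, $u$ and $v$ are not false twins in $G$, so $N(u)\neq N(v)$; combined with the equality on $W$, this forces a vertex in the symmetric difference to lie inside $\Gamma$. Without loss of generality pick $z\in (N(u)\setminus N(v))\cap\Gamma$; in particular $N(u)\cap\Gamma\neq\emptyset$.

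Now invoke the minimality of $\Gamma$: the set $\Gamma\setminus\{u\}$ fails to dominate $G$, so there exists a \emph{private vertex} $x\in N[u]$ with $N[x]\cap\Gamma=\{u\}$. Two cases arise. If $x=u$, then $N(u)\cap\Gamma=\emptyset$, contradicting the existence of $z$. If $x\neq u$, then $x\notin\Gamma$, i.e.\ $x\in W$, and $x\sim u$; but $N(u)\cap W = N(v)\cap W$ then gives $x\sim v$, so $v\in N[x]\cap\Gamma$, contradicting $N[x]\cap\Gamma=\{u\}$. Either branch yields a contradiction and the proof is complete.

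The main obstacle is extracting structural information from the single hypothesis $r(u|W)=r(v|W)$. Looking at distance $1$ only tells us $N(u)\cap W = N(v)\cap W$; this says nothing about the edges joining $u$ and $v$ to the rest of $\Gamma$, which is exactly where the discrepancy must live. Bridging this gap requires both ingredients simultaneously: Lemma~1 is needed to place the discriminating neighbor $z$ inside $\Gamma$ (so that it is not already visible through $W$), and the private-vertex argument from minimality is needed to convert this placement into an honest contradiction. Dropping either ingredient breaks the argument, as one can see with a minimum dominating set that contains a false twin pair, which is left unresolved by its complement.
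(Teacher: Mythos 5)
Your proof is correct and follows essentially the same route as the paper: reduce to distinguishing pairs $u,v\in\Gamma$, observe that equal representations force $N(u)\cap W=N(v)\cap W$, and then combine the minimality of $\Gamma$ with the absence of false twins to reach a contradiction. The only cosmetic difference is that you apply the two ingredients in the opposite order (first extracting a neighbor of $u$ inside $\Gamma$ from the no-false-twin hypothesis, then contradicting minimality via a private vertex), whereas the paper first uses minimality to show $u$ and $v$ have no neighbors in $\Gamma$ and then concludes they would be false twins.
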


\begin{proof}{
By Lemma~\ref{lem:domsetnoft}, $G$ has a minimum dominating set
$\Gamma$ with no pair of false twin vertices.  Suppose, on the
contrary, that $V(G)\setminus\Gamma$ is not a resolving set for
$G$. Then, there exist vertices $u$ and $v$ in $\Gamma$ such that
$r(u|V(G)\setminus\Gamma)=r(v|V(G)\setminus\Gamma)$. This implies
that all neighbors of $u$ and $v$ in $V(G)\setminus\Gamma$ are the
same. Therefore, $u$ and $v$ have no neighbor in $\Gamma$;
otherwise we can remove one of the vertices $u$ and $v$ from
 $\Gamma$ and get a dominating set with cardinality $|\Gamma|-1$.
  Hence, $u$ and $v$ are false twin vertices, which is a contradiction.
  Thus, $V(G)\setminus\Gamma$ is a resolving set for $G$.
  Accordingly, $\beta(G)\leq n-\gamma(G).$
  }\end{proof}
%
The following example shows that Theorem~\ref{thm:V-Gama resolv}
gives a better upper bound for $\beta(G)$ comparing the upper
bound in Theorem~\ref{n-d}.
\begin{ex}
Let $G$ be a connected graph of order $3k+1$, $k\geq 6$, obtained
from the wheel $W_k$ by replacing each spoke by a path of length
three. It is easy to see that $\gamma(G)=k+1$, by
Theorem~\ref{thm:V-Gama resolv},  $\beta(G) \leq n-\gamma(G)=2k$
while  $diam(G)\le 6$ and by Theorem~\ref{n-d}, $\beta(G)\leq
3k+1-diam(G)$.

\end{ex}

In the sequel we need the following definition.
\begin{deff}\label{def:specific}
Let $\Gamma$ be a dominating set in a connected graph $G$ and
$u\in\Gamma$. A vertex $\bar{u}\in V(G)\setminus\Gamma$ is called
a {\it private neighbor } of  $u$ if $u$ is the unique neighbor of
$\bar{u}$ in $\Gamma$, i.e., $N(\bar{u})\cap\Gamma=\{u\}$.
\end{deff}
It is clear that each vertex of a minimum dominating set $\Gamma$
for a graph $G$ has a private neighbor  or it is a single vertex
in $\Gamma$. The following lemma provides a minimum dominating set
$\Gamma$ for $G$ with no  false twin pair of vertices such that
every vertex in $\Gamma$ has a private neighbor.
\begin{lemma}\label{lem:all have specific}
Every connected graph $G$ has  a minimum dominating set  $\Gamma$
with no false twin  pair of vertices  such that
 every vertex in  $\Gamma$ has a private neighbor.
\end{lemma}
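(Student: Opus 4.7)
My plan is to follow the extremal-swap strategy that was successful in the proof of Lemma~\ref{lem:domsetnoft}. Among all minimum dominating sets of $G$ that have no false twin pair (a nonempty family, by Lemma~\ref{lem:domsetnoft}), I pick $\Gamma$ minimizing $b(\Gamma):=|\{u\in\Gamma:u\text{ has no private neighbor}\}|$. The goal is to show $b(\Gamma)=0$; I assume for contradiction that there exists $u\in\Gamma$ without a private neighbor and try to produce a minimum dominating set $\Gamma'$ in the same family with $b(\Gamma')<b(\Gamma)$.

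First I would observe that $u$ has no neighbor in $\Gamma$: if $u$ were adjacent to some $z\in\Gamma$, then $z$ would dominate $u$, and since no vertex of $V(G)\setminus\Gamma$ has $u$ as its unique dominator, $\Gamma\setminus\{u\}$ would be a smaller dominating set, contradicting minimality. Because $G$ is connected, $u$ must have a neighbor $x\in V(G)\setminus\Gamma$, and I set $\Gamma'=(\Gamma\setminus\{u\})\cup\{x\}$. Three routine verifications follow: (i) $\Gamma'$ is a minimum dominating set, since every vertex formerly dominated only by $u$ has another dominator in $\Gamma\setminus\{u\}$ (no private neighbor) and $x$ dominates $u$; (ii) $\Gamma'$ has no false twin pair, because any such pair would have to involve $x$, but $u\in N(x)$ while $u$ is nonadjacent to every vertex of $\Gamma\setminus\{u\}$, ruling out any twin of $x$; (iii) $u$ itself serves as a private neighbor of $x$ in $\Gamma'$ since $N(u)\cap\Gamma'=\{x\}$, so $x$ is good in $\Gamma'$.

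The crux, and the main obstacle, is certifying $b(\Gamma')<b(\Gamma)$. I would classify each $w\in\Gamma\setminus\{u\}$ according to its status in $G[\Gamma]$. If $w$ has a neighbor in $\Gamma$, then (because $u\notin N(w)$) it still has one in $\Gamma'$, so by the standard minimality argument applied to the minimum dominating set $\Gamma'$, $w$ inherits a private neighbor in $\Gamma'$. If $w$ is isolated in $G[\Gamma]$ but $x\in N(w)$, then $w$ becomes non-isolated in $G[\Gamma']$ and, again by minimality, has a private neighbor. The delicate case is $w$ isolated in $G[\Gamma]$ with $x\notin N(w)$: here $w$ has a private neighbor in $\Gamma'$ precisely when either some private neighbor of $w$ in $\Gamma$ lies outside $N(x)$, or there is some $\bar{w}\in V(G)\setminus\Gamma$ with $N(\bar{w})\cap\Gamma=\{u,w\}$ and $\bar{w}\notin N(x)$. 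I plan to finish by making a secondary extremal choice, namely picking $x\in N(u)\setminus\Gamma$ so as to minimize the number of destructive $w$'s, and then matching every remaining destruction against a compensating gain---an originally bad vertex $w'\in\Gamma\setminus\{u\}$ that acquires a new joint private neighbor of the form $\bar{w'}$ with $N(\bar{w'})\cap\Gamma=\{u,w'\}$ and $\bar{w'}\notin N(x)$. Combined with the minimality of $b(\Gamma)$, this yields the required contradiction.
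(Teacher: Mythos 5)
Your setup is correct and follows the same extremal-swap route as the paper's own proof: choosing $\Gamma$ to minimize the number of vertices without a private neighbor, observing that a bad vertex $u$ must be isolated in the subgraph induced by $\Gamma$, performing the swap $\Gamma'=(\Gamma\setminus\{u\})\cup\{x\}$, and the verifications (i)--(iii) are all sound. But the step you yourself flag as the crux is a genuine gap, and the plan you sketch does not close it. Nothing prevents a vertex $w\in\Gamma\setminus\{u\}$ that is isolated in the induced subgraph, not adjacent to $x$, and all of whose private neighbors happen to be adjacent to $x$, from losing every private neighbor after the swap; your proposed remedy --- choosing $x$ to minimize the number of such destructive $w$'s and then ``matching every remaining destruction against a compensating gain'' --- is not an argument: no injection from destroyed vertices to newly repaired ones is exhibited, and there is no reason such a matching must exist (indeed, when $u$ has a unique external neighbor there is no freedom in the choice of $x$ at all). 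So the strict inequality $b(\Gamma')<b(\Gamma)$, and hence the contradiction with minimality, is not established. (For what it is worth, the paper's proof takes exactly this route and simply asserts that the number of vertices with a private neighbor increases, so it elides the same point.)

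The standard way to make this argument terminate is to change the potential: among minimum dominating sets with no false twin pair, maximize the number of edges of the subgraph induced by $\Gamma$ (this is the Bollob\'as--Cockayne argument for the existence of a minimum dominating set in which every vertex has an external private neighbor). Since $u$ is isolated in the induced subgraph, deleting it loses no edges; since $u$ has no private neighbor, its neighbor $x$ is dominated by some $z\in\Gamma\setminus\{u\}$, so adding $x$ gains at least the edge between $x$ and $z$. Hence the swap strictly increases the edge count while preserving minimum domination and the absence of false twins (by your argument (ii)), contradicting maximality. At a maximizer every vertex of $\Gamma$ therefore has a private neighbor: vertices with a neighbor inside $\Gamma$ have one by minimality of $|\Gamma|$, and isolated vertices without one could be swapped. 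Replacing your potential $b(\Gamma)$ by this edge count turns your proposal into a complete proof.
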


\begin{proof}{
By Lemma~\ref{lem:domsetnoft},
let $\Gamma$ be a minimum dominating set  with  no  false twin
pair of vertices with minimum number of single vertices. Also,
let $u$ be a single vertex in $\Gamma$.
 Since $G$ is a
connected graph, $u$ has  a neighbor in $V(G)\setminus\Gamma$, say
$x$. Now $\Gamma'=\Gamma\cup\{x\}\setminus\{u\}$ is also a minimum
dominating set for $G$  with no false twin pair of vertices,
because $x$ is the unique vertex in  $\Gamma'$ that is adjacent to
$u$. Moreover, $u$ is a private neighbor of  $x$ in
$V(G)\setminus\Gamma'$. Note that, $x$ was not  a private neighbor
of any vertex  in $\Gamma$.
Therefore, the number of vertices in $\Gamma'$ which have a
private neighbor  is more than the number of vertices in $\Gamma$
which have a private neighbor in $V(G)\setminus\Gamma$. On the
other words, the number of single vertex in  $\Gamma'$ is fewer
than $\Gamma$.  This contradiction implies that all vertices in
$\Gamma$ have a private neighbor in $V(G)\setminus\Gamma$.
}\end{proof}
\begin{thm}\label{thm:character}
Let $G$ be a connected graph of order $n$. Then
$\beta(G)=n-\gamma(G)$ if and only if $G=K_n$ or $G=K_{s,t}$, for
some $s,t\ge2$.
\end{thm}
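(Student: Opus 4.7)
The ``if'' direction is a direct verification: $K_n$ has $\gamma=1$ and $\beta=n-1$, and for $K_{s,t}$ with $s,t\ge 2$ one has $\gamma=2$ while all vertices in the same partite set are pairwise false twins, giving $\beta=s+t-2=n-2$. So equality holds. The plan for the converse is to analyse the structure imposed by the equality on a minimum dominating set supplied by Lemma~\ref{lem:all have specific}.

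Assume $\beta(G)=n-\gamma(G)$ and fix $\Gamma=\{u_1,\ldots,u_k\}$ from that lemma, with each $u_i$ carrying a private neighbour $\bar u_i\in W:=V(G)\setminus\Gamma$. Theorem~\ref{thm:V-Gama resolv} together with the equality hypothesis forces $W$ to be a metric basis. The backbone observation I would first establish is that for every index $i$ the pair resolved in $W$ only by $\bar u_i$ must take the form $\{\bar u_i,u_{f(i)}\}$ for some $f(i)\in\{1,\ldots,k\}$. Indeed, any other endpoint in $W\setminus\{\bar u_i\}$ would self-resolve via distance $0$, so both endpoints lie in $\Gamma\cup\{\bar u_i\}$; and the subcase in which both lie in $\Gamma$ is impossible once $k\ge 2$, because any endpoint $u_j\ne u_i$ would be detected also by its own private neighbour $\bar u_j$ (at distance $1$ from $u_j$ and at distance at least $2$ from the partner, since $\bar u_j$'s only neighbour in $\Gamma$ is $u_j$). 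This yields the key identity $d(\bar u_i,w)=d(u_{f(i)},w)$ for every $w\in W\setminus\{\bar u_i\}$.

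When $\gamma(G)=1$, Theorem~\ref{n-d} gives $n-1=\beta(G)\le n-diam(G)$, forcing $diam(G)=1$ and hence $G=K_n$. For $\gamma(G)=2$, write $\Gamma=\{u,v\}$; Theorem~\ref{n-d} now yields $diam(G)\le 2$, so $d(u,w)\in\{1,2\}$ for every $w\in W$ depending only on adjacency to $u$, and similarly for $v$. Partition $W$ as $P_u=(N(u)\setminus N(v))\cap W$, $P_v=(N(v)\setminus N(u))\cap W$, and $Q=N(u)\cap N(v)\cap W$. Applying the identity above throughout $W$---with $f(\bar u)\in\{u,v\}$, and observing that this choice must be uniform across $P_u$ and across $P_v$, otherwise two private neighbours of the same side would impose contradictory mutual adjacencies---four patterns arise. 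Two of them produce incompatible edges between $P_u$ and $P_v$; one produces the graph consisting of two cliques glued along the edge $uv$, for which a short direct check exhibits a resolving set of size $|P_u|+|P_v|-2<|W|$, contradicting $W$ being a basis; and the surviving pattern forces $Q=\emptyset$, both $P_u$ and $P_v$ independent, every vertex of $P_u$ adjacent to every vertex of $P_v$, and $u\sim v$, i.e., $G=K_{|P_v|+1,|P_u|+1}$ with both sides of size at least~$2$.

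The case $\gamma(G)\ge 3$ is the main obstacle. I would aim for a contradiction by exhibiting a resolving set strictly smaller than $W$. The presence of a third index $\ell\notin\{i,f(i)\}$ allows the identity $d(\bar u_i,w)=d(u_{f(i)},w)$ to be specialised to $w=\bar u_\ell$, pinning down adjacencies among the private neighbours (in particular $\bar u_i\sim\bar u_{f(i)}$ when $f(i)\ne i$, and $\bar u_i\nsim\bar u_\ell$); tracing these along the orbits of $f$, I would swap each $\bar u_i$ for $u_i$ (if $f(i)=i$) or for $u_{f(i)}$ (otherwise), producing a set of the same size in which at least one further private neighbour becomes redundant, hence a resolving set of size $n-\gamma(G)-1$. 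Verifying that this two-step modification does not confuse any previously separated pair---beyond the essential ones, now handled by the freshly inserted $\Gamma$-vertices---is the delicate point, and should follow from the triangle inequality $|d(x,u_i)-d(x,\bar u_i)|\le 1$ combined with the identities forced by the essential-pair analysis.
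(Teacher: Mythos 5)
Your forward direction and your ``backbone observation'' are sound: since $W=V(G)\setminus\Gamma$ is a metric basis, deleting any $w\in W$ leaves some pair unresolved, both of whose endpoints must lie in $\Gamma\cup\{w\}$, and the private neighbours rule out the case of two $\Gamma$-endpoints; the resulting identity $d(\bar u_i,w')=d(u_{f(i)},w')$ and the adjacency consequences you draw from it (e.g.\ $\bar u_i\sim\bar u_{f(i)}$, $\bar u_i\nsim\bar u_\ell$) are all correct. The cases $\gamma(G)=1$ and, modulo an under-specified case analysis, $\gamma(G)=2$ are plausible. But the case $\gamma(G)\ge 3$ --- which you yourself identify as ``the main obstacle'' --- is not proved: you describe a swap of each $\bar u_i$ for $u_i$ or $u_{f(i)}$ and assert that ``at least one further private neighbour becomes redundant,'' then concede that verifying the modified set still resolves everything ``is the delicate point'' and ``should follow from'' the triangle inequality. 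That verification is precisely the mathematical content of the converse, and nothing in the proposal establishes it; as written, the argument for $\gamma\ge3$ is a plan, not a proof. A secondary weakness is that in the $\gamma=2$ case the ``four patterns'' and the claimed resolving set of size $|P_u|+|P_v|-2$ for the two-glued-cliques configuration are asserted rather than checked.

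For comparison, the paper sidesteps the case split on $\gamma$ entirely. It first upgrades $\Gamma$ to the set $W_1$ of private neighbours, shows $W_1$ is itself a minimum dominating set with no false twins (so $B=V(G)\setminus W_1$ is again a metric basis), and partitions $B$ into blocks $B_i=V_i\cup\{u_i\}$, each block consisting of the vertices whose unique $W_1$-neighbour is $x_i$. Applying the non-redundancy of \emph{every} vertex $a\in B$ (not only the private neighbours) yields a partner $x_{j_a}\in W_1$, and the dichotomy $j_a=i$ versus $j_a\ne i$ forces each block to be a clique or an independent set; connectivity then pins the whole graph down to a single block plus $x_i$ (giving $K_n$) or two blocks plus $x_i,x_j$ (giving $K_{s,t}$), which is what rules out $\gamma\ge3$ structurally. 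If you want to salvage your approach, you would need to either carry out the redundancy verification in full for $\gamma\ge3$ or adopt a block decomposition of this kind that lets connectivity do that work for you.
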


\begin{proof}{
Clearly, for $G=K_n$ and
 $G=K_{s,t}$, $s,t\ge2$, the equality holds.  Now let $\beta(G)=n-\gamma(G)$.  By
Lemma~\ref{lem:all have specific}, there exists a minimum
dominating set $\Gamma$ for $G$ with no false twin vertices such
that all vertices in $\Gamma$ have a private neighbor  in
$V(G)\setminus \Gamma$.  Let $\Gamma=\{u_1,u_2,\ldots,u_r\}$ and
$W_1=\{x_1,x_2,\ldots,x_r\}$, where  $x_i$ is private neighbor of
$u_i$ for an $i$, $1\leq i\leq r$. Since $u_i$ is the unique
neighbor of $x_i$ in $\Gamma$, for each $i,j$, $1\leq i,j\leq r$,
the $i^{th}$ coordinate of $r(u_j|W_1)$ is $1$ if and only if
$j=i$. Therefore, $W_1$ resolves the set $\Gamma$.
\par
 By Theorem~\ref{thm:V-Gama resolv},
 $W=V(G)\setminus \Gamma$ is a resolving set for $G$ and  $\beta(G)=n-\gamma(G)$ implies that $W$ is a metric basis. Now let
$x\in W\setminus W_1$. Since $W_1$ resolves $\Gamma$, there exists
a unique vertex $u_i\in \Gamma$ such that $r(x|W_1)=r(u_i|W_1)$.
Thus, $x$ and $u_i$ have the same neighbors in $W_1$, but
$N(u_i)\cap W_1=\{x_i\}$, hence $N(x)\cap W_1=\{x_i\}$. Thus,
$W\setminus W_1$ is partitioned into sets $V_1,V_2,\ldots,V_r$,
(some $V_i$'s could be empty) such that for each $i$, $1\leq i\leq
r$, and every $x\in V_i$, $N(x)\cap W_1=\{x_i\}$. Therefore, $W_1$
is a minimum dominating set for $G$. Moreover, $W_1$ has no pair
of false twin vertices,
  because for each $i$, $1\leq i\leq r$, $x_i$ is the unique neighbor of  $u_i$ in $W_1$.
  Hence, by Theorem~\ref{thm:V-Gama resolv} the set $B=V(G)\setminus W_1$ is a metric basis of $G$.
\par
\par
Now let $B_i=V_i\cup\{u_i\}$.   For a fixed $i$, $1\leq i\leq r$,
let $a$ be an arbitrary vertex in  $B_i$. Since $B$ is a metric
basis of $G$, $B\setminus\{a\}$ is not a resolving set for $G$.
Therefore, there exists a vertex $x_{j_a}\in W_1$ such that
$r(a|B\setminus\{a\})=r(x_{j_a}|B\setminus\{a\})$. If $j_a=i$,
then $a$ is adjacent to all vertices in $B_i\setminus\{a\}$, since
$x_i$ is adjacent to all vertices in $B_i$. If $j_a\neq i$, then
$a$ is not adjacent to any vertex in $B_i$, since $x_j$, $j\neq
i$, is not adjacent to any vertex in $B_i$. Hence, for every two
vertices $a$ and $a'$ in $B_i$, where $j_a=j_{a'}$.  Thus, we
conclude that,  for every vertex  $a\in B_i$, there exists a
vertex $x_{j}\in W_1$ such that
$r(a|B\setminus\{a\})=r(x_{j}|B\setminus\{a\})$, and there are two
possibilities $j=i$ or $j\neq i$; in the former case $B_i$ is a
clique and in the latter case $B_i$ is an independent set.

Now let there exists $i$, $1\leq i\leq r$, such that for every
vertex $a\in B_i$,
$r(a|B\setminus\{a\})=r(x_{i}|B\setminus\{a\})$.
 It was shown that in this case $B_i$ is a clique. Moreover, since $a$ is not adjacent to
any vertex in $W_1\setminus\{x_i\}$, $x_i$ is not adjacent to any
vertex in $W_1\setminus\{x_i\}$. Moreover, since $x_i$ is not
adjacent to any vertex in $B\setminus B_i$, $a$ is not adjacent to
any vertex in $B\setminus B_i$.
%
 Therefore, the induced subgraph by $B_i\cup\{x_i\}$  is a  maximal connected subgraph of $G$.
 Since $G$ is a connected graph, $G=B_i\cup\{x_i\}$, and consequently $G=K_n$.
\par
Otherwise,  for each $i$, $1\leq i\leq r$,  and for every vertex
 $a\in B_i$,  $r(a|B\setminus\{a\})\neq
r(x_i|B\setminus\{a\})$ and
$r(a|B\setminus\{a\})=r(x_j|B\setminus\{a\})$ for some $j\neq i$.
Now, for each $b\in B_j$, if $r(b|B\setminus
\{b\})=r(x_k|B\setminus \{b\})$, then $x_k$ is adjacent to all
vertices in $B_i$, since $b$ is adjacent to all vertices in $B_i$.
Thus, $k=i$. It was shown that in this case each $B_i$, $1\leq i
\leq r$, is an independent set. Now, since $x_j$ is adjacent to
all vertices in $B_j$, every vertex $a\in B_i$ is adjacent to all
vertices in $B_j$. Therefore, the induced subgraph $B_i\cup B_j$
is a complete bipartite graph.
\par
Note that, each vertex in $B_i\cup B_j$ is not adjacent to any
vertex in $B_k$, $k\in\{1,2,\dots, r\}\setminus\{i,j\}$,
 because $x_i$ and $x_j$  are not adjacent to any vertex in $B\setminus(B_i\cup B_j)$.
 On the other hand, $x_i$ and $x_j$  are not adjacent to any vertex in $W_1\setminus\{x_i,x_j\}$,
 since all vertices in $B_i\cup B_j$ are not adjacent to any vertex in this set.
  Therefore, the induced subgraph by $B_i\cup B_j\cup \{x_i,x_j\}$  is a  maximal connected subgraph of $G$.
  Since $G$ is a connected graph, $G=B_i\cup B_j\cup \{x_i,x_j\}$.
   Furthermore, $r(a|B\setminus\{a\})=r(x_j|B\setminus\{a\})$ implies that $x_i\sim x_j$, because $a\sim x_i$. Thus, $G=K_{s,t}$.
    Since $u_i\in B_i$ and $u_j\in B_j$, $s,t\geq2$.
}\end{proof}
%
\section{Upper Bounds for the Metric Dimension }\label{newbounds}

The domination number is a well studied parameter and there are
several bounds for $\gamma(G)$ in terms of the other graph
parameters. Following the given new upper bound for $\beta(G)$ in
Theorem~\ref{thm:V-Gama resolv}, several new upper bounds for
metric dimension can be obtained. In what follows, we present some
of these new upper bounds.
\begin{lem}\label{thm:gamma>2(delta-1)}~{\rm\cite{MR1605684}}
For every graph $G$ of order $n$ and girth $g$,
\begin{description}
\item {\rm (i)} if $g\ge 5$, then $\gamma(G)\ge \delta(G)$.
\item {\rm (ii)} if $g\ge 6$, then $\gamma(G)\ge 2(\delta(G)-1)$.
\item {\rm (iii)} $\gamma(G) \geq
\left\lceil\frac{n}{1+\Delta(G)}\right\rceil$.
\item {\rm (iv)} If  $G$ has degree sequence
$(d_1,d_2,\ldots,d_n)$ with $d_i\ge d_{i+1}$, then $\gamma(G)\ge
\min\{k\ | \ k+(d_1+d_2+\cdots+d_k)\ge n\}.$
 \item {\rm (v)} if $\delta(G) \ge 2$ and $g\ge 7$, then
$\gamma(G)\ge \Delta(G).$
\end{description}
\end{lem}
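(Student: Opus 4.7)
The plan is to establish each of the five bounds separately, grouping them by technique. Parts (iii) and (iv) are elementary counting arguments, while parts (i), (ii), and (v) exploit the girth hypothesis via injective mappings from the neighborhood of a carefully chosen vertex into a minimum dominating set.

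Part (iii) is immediate: if $D$ is any dominating set, then $V(G)=\bigcup_{d\in D}N[d]$, whence $n\le |D|(1+\Delta(G))$. Part (iv) follows from the observation that any $k$-vertex subset $S$ dominates at most $|S|+\sum_{v\in S}\deg(v)$ vertices in total, a quantity maximized when $S$ consists of the $k$ highest-degree vertices; hence if $\gamma(G)=k$, then $k+d_1+\cdots+d_k\ge n$.

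For part (i), I would pick a vertex $v$ with $\deg(v)=\delta(G)$ and a minimum dominating set $D$, set $A=D\cap N(v)$, and examine how each $u\in N(v)\setminus A$ can be dominated. The girth-$5$ hypothesis forbids both triangles and $4$-cycles through $v$, which together imply that any such $u$ must be dominated either by $v$ itself (if $v\in D$) or by a vertex at distance exactly $2$ from $v$, with each distance-$2$ vertex dominating at most one such $u$. A short case split on whether $v\in D$---in the $v\in D$ case one invokes the private-neighbor observation introduced earlier in the paper to rule out degenerate configurations---yields $|D|\ge \delta(G)$. Part (v) runs the same scheme with $v$ now chosen of maximum degree; the stronger hypotheses $g\ge 7$ and $\delta(G)\ge 2$ provide exactly the room needed to keep the dominator map injective when $\Delta(G)$-many neighbors of $v$ are involved, since $\delta(G)\ge 2$ supplies each such neighbor with an alternative dominator when $v$ would otherwise be forced.

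For part (ii), the plan is to combine the first-neighborhood analysis of (i) with a second layer on the distance-$2$ vertices $N_2(v)$. Under girth $\ge 6$, the sets $N(u)\setminus\{v\}$ for $u\in N(v)$ are pairwise disjoint and lie at distance exactly $2$ from $v$, and each $u$ contributes at least $\delta(G)-1$ of them. A two-tier count on how $D$ must hit both $N(v)$ and $N_2(v)$, again split on whether $v\in D$, delivers $|D|\ge 2(\delta(G)-1)$. I expect the main technical hurdle to be the bookkeeping in this case split, especially in handling scenarios where a single vertex of $D$ contributes to the domination count in both layers simultaneously.
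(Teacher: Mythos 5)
The paper offers no proof of this lemma at all --- it is quoted verbatim from Haynes, Hedetniemi and Slater's book --- so your proposal has to stand on its own. Parts (iii) and (iv) are correct as stated. Your sketches of (i) and (v) contain the right mechanism: with girth at least $5$ the closed neighborhoods $N[u]=N(u)\cup\{u\}$, $u\in N(v)$, pairwise meet only in $v$, so once the base vertex is arranged to lie outside the dominating set $D$ (or, when $v\in D$, once you pass to a private neighbor of $v$, respectively to second neighbors using $\delta(G)\ge 2$ and $g\ge 7$ in (v)), the map assigning to each neighbor of $v$ one of its dominators is injective. For (i) it is cleaner to simply choose $v\notin D$ from the start (possible unless $D=V(G)$, in which case the bound is trivial), which removes the case split you are worried about.

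The genuine gap is in part (ii). Your two-layer count is centered at a single vertex $v$, and the step that fails is the implicit claim that the dominators of the distance-$2$ vertices are (essentially) injective. Girth $6$ does \emph{not} give this across different branches: if $w\in N(u)\setminus\{v\}$ and $w'\in N(u')\setminus\{v\}$ with $u\ne u'$ share a dominator $x\in N[w]\cap N[w']$, the closed walk $v,u,w,x,w',u',v$ is a cycle of length exactly $6$, which girth $6$ permits. The Heawood graph ($\delta=3$, girth $6$, $\gamma=4=2(\delta-1)$) shows such collisions genuinely occur and that the bound is tight, so there is no slack to absorb them. The difficulty is therefore not the one you flag (one dominator serving both layers) but a failure of injectivity within the second layer itself. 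The standard repair is to center the argument on an \emph{edge}: if some edge $uv$ has both ends outside $D$, then the dominators of the vertices of $N(u)\setminus\{v\}$ are pairwise distinct (a collision could only occur at $u$, which is not in $D$), likewise for $N(v)\setminus\{u\}$, and the two families are disjoint because $N[a]\cap N[b]\neq\emptyset$ for $a\in N(u)\setminus\{v\}$ and $b\in N(v)\setminus\{u\}$ would force a cycle of length at most $5$; this yields $|D|\ge 2(\delta(G)-1)$ at once. The remaining case, in which $D$ is a vertex cover so that $V(G)\setminus D$ is independent, needs a separate but easy count of neighborhoods of vertices outside $D$.
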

%
%
%
%

\begin{lem}~{\rm\cite{Laplacian}}
Let  $\mu_n\ge \mu_{n-1}\ge \cdots\ge \mu_1$ be the eigenvalues of
Laplacian matrix of connected graph $G$ of order $n\ge 2$, then
$\gamma(G)\ge \frac{n}{\mu_n(G)}.$
\end{lem}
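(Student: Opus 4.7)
The plan is to prove the inequality via a Rayleigh-quotient argument applied to a well-chosen test vector built from the characteristic vector of a minimum dominating set. The natural first attempt — plugging $\chi_\Gamma$ directly into the Rayleigh quotient for $\mu_n$ — gives $\mu_n \ge (n-\gamma)/\gamma$, i.e.\ $\gamma \ge n/(\mu_n+1)$, which is slightly weaker than what we want. The trick is to first project out the trivial eigendirection of $L$.

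So I would proceed as follows. Let $\Gamma$ be a minimum dominating set of $G$, set $\gamma = \gamma(G) = |\Gamma|$, and define
\[
x \;=\; \chi_\Gamma \;-\; \tfrac{\gamma}{n}\,\mathbf{1},
\]
where $\chi_\Gamma$ is the characteristic vector of $\Gamma$ and $\mathbf{1}$ is the all-ones vector. By construction $x \perp \mathbf{1}$, and since $G$ is connected the nullspace of $L$ is exactly $\mathrm{span}(\mathbf{1})$, so $x$ lies in the span of the eigenvectors of $L$ corresponding to the positive eigenvalues. Next I compute the two sides of the Rayleigh quotient. Using $L\mathbf{1} = 0$,
\[
x^{T}Lx \;=\; \chi_\Gamma^{T}L\,\chi_\Gamma \;=\; \sum_{ij\in E(G)}\bigl(\chi_\Gamma(i)-\chi_\Gamma(j)\bigr)^{2} \;=\; e(\Gamma, V\setminus\Gamma),
\]
the number of edges between $\Gamma$ and its complement. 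A direct expansion gives
\[
\|x\|^{2} \;=\; \gamma \;-\; \frac{\gamma^{2}}{n} \;=\; \frac{\gamma(n-\gamma)}{n}.
\]

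Now I would apply the standard fact that $\mu_n = \max_{y \ne 0} y^{T}Ly/\|y\|^{2}$, obtaining
\[
\mu_n \;\geq\; \frac{x^{T}Lx}{\|x\|^{2}} \;=\; \frac{n\cdot e(\Gamma, V\setminus\Gamma)}{\gamma(n-\gamma)}.
\]
Because $\Gamma$ is a dominating set, every vertex of $V\setminus\Gamma$ has at least one neighbor in $\Gamma$, so $e(\Gamma, V\setminus\Gamma) \ge n-\gamma$. Note also that $G$ connected with $n\ge 2$ forces $\gamma < n$, so the fraction above is well-defined. Plugging this lower bound in yields $\mu_n \ge n/\gamma$, which is the claimed inequality $\gamma(G) \ge n/\mu_n(G)$.

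The only subtle step is the choice of test vector: using the raw $\chi_\Gamma$ loses a $+1$ because $\chi_\Gamma$ has a nontrivial component along $\mathbf{1}$ (the $0$-eigenvector of $L$), and the Rayleigh bound is sharp only when that component is removed. Once one realizes this, the rest is a short computation and a one-line use of the dominating-set property.
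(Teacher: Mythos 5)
Your argument is correct, but note that the paper does not actually prove this lemma: it is quoted verbatim from the cited reference of Lu, Liu and Tian, so there is no in-paper proof to compare against. Your Rayleigh-quotient computation checks out: with $x=\chi_\Gamma-\frac{\gamma}{n}\mathbf{1}$ one indeed gets $x^TLx=\chi_\Gamma^TL\chi_\Gamma=e(\Gamma,V\setminus\Gamma)$ and $\|x\|^2=\gamma(n-\gamma)/n$, the vector $x$ is nonzero because $G$ connected with $n\ge 2$ forces $0<\gamma\le n-1$, and the domination property gives $e(\Gamma,V\setminus\Gamma)\ge n-\gamma$, whence $\mu_n\ge n/\gamma$. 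Your remark about why the centering is needed is also the right diagnosis: the raw vector $\chi_\Gamma$ only yields $\gamma\ge n/(\mu_n+1)$. For comparison, the proof in the cited source proceeds differently, by a combinatorial--spectral route: a dominating set of size $\gamma$ yields a spanning star forest of $G$ in which some star has at least $\lceil n/\gamma\rceil$ vertices, the Laplacian spectral radius of $K_{1,k}$ is $k+1$, and Laplacian eigenvalues are monotone under adding edges, so $\mu_n(G)\ge \lceil n/\gamma\rceil$. That argument actually gives the slightly stronger integral bound $\gamma\ge n/\lceil\mu_n\rceil$ in a sense, while yours is shorter, self-contained, and makes transparent exactly where the domination hypothesis enters (in the edge count $e(\Gamma,V\setminus\Gamma)\ge n-\gamma$). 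Either way the lemma stands, and your proof would serve as a valid replacement for the citation.
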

By Theorem~\ref{thm:V-Gama resolv} and  above theorems, the
 list of new upper bounds for metric
dimension in terms of other graph parameters are obtained.

\begin{cor}\label{cor:beta<n-2(delta-1)}
For every connected graph $G$ of order $n$ and girth $g$,
\begin{description}
\item {\rm (i)} if $g\ge 5$, then $\beta(G)\le n-\delta(G)$.
\item {\rm (ii)} if $g\ge 6$, then $\beta(G)\le n-2\delta(G)+2$.
\item {\rm (iii)} $\beta(G)\leq
n(G)-\left\lceil\frac{n}{1+\Delta(G)}\right\rceil.$
\item {\rm (iv)}  if $G$ has degree sequence
$(d_1,d_2,\ldots,d_n)$ with $d_i\ge d_{i+1}$,
 then $\beta(G)\leq n-\min\{k \ | \ k+(d_1+d_2+\cdots+d_k)\ge n\}.$
 \item {\rm (v)} if $\mu_n\ge \mu_{n-1}\ge \cdots\ge \mu_1$ be the eigenvalues of
Laplacian matrix of $G$, then $\beta(G)\le n-\frac{n}{\mu_n(G)}.$
\item {\rm (vi)} if $\delta(G) \ge 2$ and $g\ge 7$, then
$\beta(G)\leq n-\Delta(G).$
\end{description}
\end{cor}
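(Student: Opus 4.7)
The plan is almost mechanical: Theorem~\ref{thm:V-Gama resolv} supplies the master inequality $\beta(G) \leq n - \gamma(G)$, and each of the six clauses of the corollary is obtained by substituting one of the lower bounds for $\gamma(G)$ recalled in Lemma~\ref{thm:gamma>2(delta-1)} or in the Laplacian lemma that immediately precedes the corollary. So my strategy is just to pair each item of the corollary with the matching item of the lemmas, check that the hypotheses transfer verbatim, and subtract from $n$.

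Concretely, I would proceed clause by clause. For (i), assuming $g \geq 5$, part (i) of Lemma~\ref{thm:gamma>2(delta-1)} gives $\gamma(G) \geq \delta(G)$, so $\beta(G) \leq n - \gamma(G) \leq n - \delta(G)$. For (ii), under $g \geq 6$ the analogous part (ii) produces $\gamma(G) \geq 2(\delta(G)-1)$, hence $\beta(G) \leq n - 2\delta(G) + 2$. For (iii), the universal bound $\gamma(G) \geq \lceil n/(1+\Delta(G))\rceil$ from part (iii) immediately yields the stated estimate. For (iv), I would plug in the degree-sequence bound $\gamma(G) \geq \min\{k : k + d_1 + \cdots + d_k \geq n\}$. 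For (v), the Laplacian estimate $\gamma(G) \geq n/\mu_n(G)$ rearranges to $\beta(G) \leq n - n/\mu_n(G)$. Finally, (vi) follows from part (v) of the lemma under the hypotheses $\delta(G) \geq 2$ and $g \geq 7$.

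The only items worth a second glance are the hypotheses that must be inherited intact. Theorem~\ref{thm:V-Gama resolv} is proved for every connected graph, so no extra connectivity or girth assumption is needed beyond those already present in each clause of Lemma~\ref{thm:gamma>2(delta-1)}; I simply carry them over. In particular, for (ii) one should note that $2(\delta(G)-1)$ could be negative for $\delta(G) = 0$, but the lemma implicitly assumes the graph is nontrivial, and the girth hypothesis $g \geq 6$ forces the presence of a cycle and hence $\delta(G) \geq 2$. No genuine obstacle arises: the corollary is a direct catalogue of corollaries of a single substitution, and the entire argument is one line per item.
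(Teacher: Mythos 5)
Your proof is correct and is exactly the argument the paper intends: substitute each lower bound on $\gamma(G)$ from the two preceding lemmas into $\beta(G)\le n-\gamma(G)$ from Theorem~\ref{thm:V-Gama resolv}. (One inconsequential aside: $g\ge 6$ does not force $\delta(G)\ge 2$ --- a pendant vertex attached to a long cycle gives $\delta=1$ --- but the bound in (ii) is then trivially true, so nothing breaks.)
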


For each of the given upper bounds in above, infinite classes of
graphs can be constructed to show that these bounds could be
better than $n-diam(G)$.

In the following example, we consider the well known graph Kneser
 $KG(2k+1, k)$, which is called  odd graph.  The Kneser
graph with integer parameters $n$ and $k$, $n\geq 2k$, denoted by
$KG(n,k)$, is the graph with  $k$ element subsets of set
$\{1,2,\ldots,n\}$ as the vertex set and  two vertices are
adjacent if and only if the corresponding subsets are disjoint.
\begin{ex}
Let $G=KG(2k+1,k)$, for $k\ge 3$ . Then, $n=|V(G)|={2k+1 \choose
k}$, $\Delta(G)=\delta(G)=k+1$, $g(G)=6$, $\mu_{{2k+1 \choose
k}}(G)=2k+1$, and $diam(G)=k$. Therefore, we have:
\begin{description}
\item {\rm (i)} $\beta(G)\le n-k-1$.
\item {\rm (ii)} $\beta(G)\le n-2k$.
\item {\rm (iii)} $\beta(G)\leq n-\left\lceil\frac{{2k+1 \choose
k}}{k+2}\right\rceil.$
%
\item {\rm (iv)} $\beta(G)\leq n-\frac{{2k+1 \choose k}}{k+2}.$
\item {\rm (v)} $\beta(G)\le n-\frac{{2k+1 \choose k}}{2k+1}.$
\end{description}
\end{ex}
%
%

\end{document}